\numberwithin{equation}{section}
\newtheorem{theorem}[equation]{Theorem}
\newtheorem{corollary}[equation]{Corollary}
\newtheorem{lemma}[equation]{Lemma}
\newtheorem*{definition*}{Definition}
\newtheorem*{remark*}{Remark}
\newcommand{\Z}{\mathbb{Z}}
\begin{document}

\title[An Elementary (Number Theory) Proof of Touchard]{An Elementary (Number Theory) Proof of Touchard's Congruence}

\author[G.~Hurst]{Greg Hurst}
\address{808 Coventry Point, Springfield IL, \ 62702 \ USA}
\email{ChipH588@aol.com}

\author[A.~Schultz]{Andrew Schultz}
\address{Department of Mathematics, University of Illinois at Urbana-Champaign, 1409 W. Green Street, Urbana, IL \ 61801 \ USA}
\email{acs@math.uiuc.edu}

\begin{abstract}
Let $B_n$ denote the $n$th Bell number. We use well-known recursive expressions for $B_n$ to give a generalizing recursion that can be used to prove Touchard's congruence.
\end{abstract}

\maketitle

\parskip=12pt plus 2pt minus 2pt

\section{Introduction}

For a positive integer $n$, the Bell number $B_n$ is the number of ways a set of $n$ elements can be partitioned into nonempty subsets.  Computations of Bell numbers rely on well known recursive formulae.  For example, one can compute $B_{n+1}$ by enumerating partitions according to the size of the subset $\mathcal{S}$ which contains the $n+1$st element; if $|\mathcal{S}| = n+1-d$ then there are ${n \choose n-d}$ choices for the other elements of $\mathcal{S}$ and $B_d$ ways to partition the elements not in $\mathcal{S}$.  Hence we have \begin{equation}\label{eq:bell_sum}B_{n+1} = \sum_{d=0}^n B_d{n \choose n-d}.\end{equation}

Alternatively, one can compute $B_n$ by keeping track of the number of subsets in a given partition; the Stirling number of the second kind ${n \brace k}$ counts the number of partitions of $n$ elements into precisely $k$ subsets, and so one has the well-known formula \begin{equation}\label{eq:bell_stirling}B_{n} = \sum_{k=1}^n {n \brace k}.\end{equation}  Calculations for ${n \brace k}$ rely on the binomial-like recurrence relation \begin{equation}\label{eq:stirling_recursion}{n+1\brace k}={n\brace k-1}+k{n\brace k}.\end{equation}

Using the identities above, we derive an expression for $B_{n+j}$ which generalizes both (\ref{eq:bell_sum}) and (\ref{eq:bell_stirling}):
\begin{theorem}\label{th}
For positive integers $n$ and $j$, $$B_{n+j} = \sum_{k=1}^n P_j(k){n \brace k},$$ where $P_j(x)$ is the degree $j$ polynomial $ \sum_{r=0}^jB_{j-r}{j \choose r}x^r$.
\end{theorem}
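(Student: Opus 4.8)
The plan is to argue combinatorially, directly generalizing the two countings that gave (\ref{eq:bell_sum}) and (\ref{eq:bell_stirling}). Fix $n,j\ge 1$ and sort the partitions of $[n+j]=\{1,\dots,n+j\}$ by the partition $\sigma$ they induce on $\{1,\dots,n\}$. If $\sigma$ has $k$ blocks, what remains is an ``attachment'' of the $j$ new elements $n+1,\dots,n+j$: some of them are dropped into the $k$ existing blocks of $\sigma$, and the rest are partitioned among themselves into brand-new blocks. Let $Q_j(k)$ be the number of such attachments; it depends on $k$ alone, so grouping the $\sigma$'s by their number of blocks yields $B_{n+j}=\sum_{k=1}^{n}Q_j(k){n \brace k}$, and the theorem comes down to showing $Q_j(k)=P_j(k)$.

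To evaluate $Q_j(k)$ I would stratify an attachment by the set $T\subseteq\{n+1,\dots,n+j\}$ of new elements that end up in one of the $k$ old blocks. If $|T|=r$ there are ${j\choose r}$ choices for $T$, then $k^{r}$ ways to decide which old block each element of $T$ joins (repeats allowed, since two elements of $T$ may share a block), and finally $B_{j-r}$ ways to partition the remaining $j-r$ new elements among themselves, with the usual convention $B_{0}=1$ handling the case $r=j$. Hence $Q_j(k)=\sum_{r=0}^{j}{j\choose r}k^{r}B_{j-r}=P_j(k)$, which is exactly the claim; in passing this also shows $P_j$ has degree exactly $j$, its leading coefficient being $B_0{j\choose j}=1$. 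The one thing that requires care — really the only obstacle — is verifying that this stratification is a genuine bijection: from a partition $\pi$ of $[n+j]$ restricting to $\sigma$ one reads off $T$ (the new elements lying in blocks that meet $\{1,\dots,n\}$), the function sending each element of $T$ to the block of $\sigma$ it joined, and the partition of $\{n+1,\dots,n+j\}\setminus T$ cut out by the remaining blocks of $\pi$; conversely any such triple reassembles a unique $\pi$. One also uses here that for $n\ge 1$ every partition of $\{1,\dots,n\}$ has between $1$ and $n$ blocks, which is where the hypothesis $n\ge1$ enters.

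If one would rather stay with the displayed recursions, here is an alternative by induction on $j$: the base case $j=0$ is precisely (\ref{eq:bell_stirling}), since $P_0\equiv 1$. For the step, apply the inductive hypothesis for $j$ with $n+1$ in place of $n$, so $B_{n+j+1}=\sum_{k=1}^{n+1}P_j(k){n+1 \brace k}$; expand ${n+1 \brace k}$ using (\ref{eq:stirling_recursion}), reindex the ${n \brace k-1}$-sum, and drop the vanishing ${n \brace n+1}$ term. The right-hand side collapses to $\sum_{k=1}^{n}\bigl(P_j(k+1)+kP_j(k)\bigr){n \brace k}$, so it is enough to prove the polynomial identity $P_{j+1}(x)=P_j(x+1)+xP_j(x)$. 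Expanding $(x+1)^r$ by the binomial theorem, interchanging the order of summation, simplifying with ${j\choose r}{r\choose s}={j\choose s}{j-s\choose r-s}$, and summing the resulting inner series by (\ref{eq:bell_sum}) (in the form $B_{m+1}=\sum_{d=0}^{m}{m\choose d}B_d$) gives $P_j(x+1)=\sum_{s=0}^{j}{j\choose s}B_{j+1-s}x^{s}$; adding $xP_j(x)$ and collapsing coefficients by Pascal's rule ${j\choose s}+{j\choose s-1}={j+1\choose s}$ returns $P_{j+1}(x)$ exactly. On this route the only real computation is that Vandermonde-type identity together with the single appeal to (\ref{eq:bell_sum}).
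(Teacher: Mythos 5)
Your proposal is correct, and its main argument is genuinely different from the paper's. The paper proves the theorem purely algebraically by induction on $j$: it substitutes $n+1$ for $n$ in the inductive hypothesis, expands ${n+1 \brace k}$ via (\ref{eq:stirling_recursion}), and extracts the coefficient of $k^\ell{n \brace k}$, using ${j\choose r}{r\choose \ell}={j\choose \ell}{j-\ell\choose r-\ell}$, identity (\ref{eq:bell_sum}), and Pascal's rule to recognize $B_{j-\ell+1}{j+1\choose \ell}$. Your first argument is instead the direct combinatorial one --- stratify partitions of $\{1,\dots,n+j\}$ by the induced partition on $\{1,\dots,n\}$ and count attachments of the $j$ new elements --- which is essentially the proof of \cite{Spivey} that the introduction alludes to; it buys a transparent interpretation of every factor $B_{j-r}{j\choose r}k^r$ and makes the formula feel inevitable, at the cost of a bijection check, whereas the paper's route stays entirely within the three displayed identities (which is the point of the note). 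Your alternative inductive argument is the paper's proof repackaged: isolating the polynomial identity $P_{j+1}(x)=P_j(x+1)+xP_j(x)$ is a clean way to organize exactly the coefficient computation the paper performs inside the double sum, and the ingredients (the Vandermonde-type identity, the single appeal to (\ref{eq:bell_sum}), Pascal's rule) coincide step for step. Both of your arguments are complete; the only cosmetic remark is that the recursion route's base case $j=0$ sits just outside the stated hypothesis ``$j$ a positive integer,'' a wrinkle the paper's own proof shares.
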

It appears that this formulation wasn't discovered until recently when a combinatorial proof of this result was given in \cite{Spivey}.  We arrived at this result independently through algebraic manipulations of (\ref{eq:bell_sum}-\ref{eq:stirling_recursion}); since this result comes directly from these well-known identities, it is surprising that Theorem \ref{th} wasn't recognized much sooner.  

This project began when the first author was a student in the second author's elementary number theory class.  Before knowing the general form of the polynomials $P_j(k)$ from Theorem \ref{th}, the authors could find specific identities such as $$B_{n+5} = \sum_{k=1}^n (k^5 + 5k^4 + 20k^3 + 50k^2 + 75k + 52){n \brace k}$$
These computations were being carried out just as modular arithmetic was being introduced in the concurrent course, and it was clear that when $j$ was prime the polynomials $P_j(k)$ were ripe for simplification modulo $j$.  Flushing this observation out not only provided a great tour of some of the most familiar identities and techniques for computations modulo $p$, but ultimately led to a rediscovery of 
\begin{corollary}[Touchard's Congruence]\label{cor}
For positive integers $n$ and $m$ and $p$ a prime number, $$B_{n+p^m} \equiv mB_n + B_{n+1} \mod{p}.$$
\end{corollary}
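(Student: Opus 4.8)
The plan is to feed $j=p^m$ into Theorem~\ref{th} and reduce the polynomial $P_{p^m}$ modulo $p$, but it is cleaner to settle the case $j=p$ first and then bootstrap to prime powers by induction on $m$. For the base case $m=1$, write $P_p(x) = \sum_{r=0}^{p} B_{p-r}\binom{p}{r}x^{r}$; since $p\mid\binom{p}{r}$ for $1\le r\le p-1$, all of the middle terms vanish mod $p$ and $P_p(x)\equiv B_p + x^{p}\pmod p$, whence $P_p(k)\equiv B_p + k\pmod p$ by Fermat's little theorem. The one input from outside the displayed identities is $B_p\equiv 2\pmod p$; this follows from (\ref{eq:bell_stirling}) together with the fact that $p\mid{p\brace k}$ for $2\le k\le p-1$ while ${p\brace 1}={p\brace p}=1$ — the divisibility being immediate either from the closed form ${p\brace k}=\frac{1}{k!}\sum_{i}(-1)^{i}\binom{k}{i}(k-i)^{p}$ combined with Fermat, or from a $\Z/p\Z$-action on the relevant partitions — and it may also simply be quoted as classical.

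Granting $B_p\equiv 2$, Theorem~\ref{th} gives $B_{n+p}\equiv\sum_{k=1}^{n}(k+2){n\brace k}\pmod p$, and the remaining sum is evaluated by \emph{re-using Theorem~\ref{th} at $j=1$}: since $P_1(x)=B_1\binom{1}{0}+B_0\binom{1}{1}x=1+x$, that instance reads $B_{n+1}=\sum_k{n\brace k}+\sum_k k{n\brace k}$, so $\sum_k k{n\brace k}=B_{n+1}-B_n$ (using $\sum_k{n\brace k}=B_n$). Therefore $B_{n+p}\equiv(B_{n+1}-B_n)+2B_n = B_n+B_{n+1}\pmod p$, which is Touchard's congruence for $m=1$.

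For the inductive step, assume $B_{n+p^{m}}\equiv mB_n+B_{n+1}\pmod p$ for all positive $n$, and let $E$ denote the shift $B_n\mapsto B_{n+1}$, so the hypothesis is $B_{n+p^{m}}\equiv (m+E)B_n\pmod p$. Applying $m+E$ (a combination of forward shifts, hence congruence-preserving) and iterating $p$ times, legitimately since only positive indices occur, gives $B_{n+p^{m+1}}=B_{n+p\cdot p^{m}}\equiv (m+E)^{p}B_n\pmod p$; expanding, killing $\binom{p}{i}$ for $1\le i\le p-1$, and using $m^{p}\equiv m$ collapses this to $B_{n+p^{m+1}}\equiv mB_n + E^{p}B_n = mB_n + B_{n+p}\pmod p$, and the base case turns $B_{n+p}$ into $B_n+B_{n+1}$, yielding $(m+1)B_n+B_{n+1}$. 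The routine parts are the vanishing of binomial coefficients modulo $p$ and the two uses of Fermat; the step carrying the real content is the base case — recognizing the collapse of $P_p$ to $x^{p}+B_p$, pinning down $B_p\equiv 2$, and spotting that the leftover $\sum_k k{n\brace k}$ is exactly $B_{n+1}-B_n$ — after which the inductive step is essentially the congruence $(m+E)^{p}\equiv m+E^{p}\pmod p$.
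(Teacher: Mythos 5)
Your proof is correct, but it reaches the general prime-power case by a genuinely different route than the paper. The paper applies Theorem \ref{th} once at $j=p^m$, uses ${p^m\choose r}\equiv 0 \bmod p$ for $0<r<p^m$ to collapse $P_{p^m}(k)$ to $B_{p^m}+k$, and then carries the whole weight of the argument in a separate lemma, $B_{p^m}\equiv m+1 \bmod p$, proved by letting $\Z/p^m\Z$ act on its own partitions and counting the fixed ones (exactly the $m+1$ ``congruence mod $p^{m-j}$'' partitions). Your base case $m=1$ is essentially the paper's argument specialized to $j=p$ --- including the same trick of recognizing $\sum_k k{n\brace k}=B_{n+1}-B_n$ via $P_1$ --- except that you establish $B_p\equiv 2$ from (\ref{eq:bell_stirling}) and the divisibility $p\mid{p\brace k}$ for $2\le k\le p-1$ rather than from the orbit count. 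Where you truly diverge is the passage to $m>1$: instead of the combinatorial lemma you iterate the shift-operator congruence $B_{N+p^m}\equiv(m+E)B_N$ a total of $p$ times and invoke $(m+E)^p\equiv m^p+E^p\equiv m+E^p \bmod p$, which is a clean and fully rigorous induction (all intermediate indices are positive, and $\Z$-linear combinations of forward shifts preserve congruences). What each approach buys: the paper's is uniform in $m$ and isolates the arithmetic content in one attractive counting lemma (which is itself the $n=0$ instance of the corollary), while yours needs only the $j=p$ reduction and replaces the group action with a formal ``freshman's dream'' induction --- arguably more elementary, at the cost of being less direct and not exhibiting the fixed partitions that make $B_{p^m}\equiv m+1$ transparent.
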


\section{A Proof of Theorem \ref{th}}

We prove the result by induction, with $j=0$ our (trivial) base case.  By induction we have 
$$B_{n+j+1} = \sum_{k=1}^{n+1} P_j(k){n+1 \brace k} =  \sum_{k=1}^{n+1}\sum_{r=0}^j  B_{j-r}{j \choose r}k^r {n+1 \brace k}.$$ Applying identity (\ref{eq:stirling_recursion}) to ${n+1 \brace k}$, and using ${n \brace 0} = {n \brace n+1} = 0$, we have 
\begin{equation*}
\begin{split}
\sum_{k=1}^{n+1}\sum_{r=0}^j& B_{j-r}{j \choose r} k^r\left({n \brace k-1}+k{n \brace k}\right) \\&= \sum_{k=2}^{n+1} \sum_{r=0}^j B_{j-r}{j \choose r}k^r{n \brace k-1} + \sum_{k=1}^{n}\sum_{r=0}^j B_{j-r}{j \choose r}k^{r+1}{n \brace k}\\&=
\sum_{k=1}^{n} \sum_{r=0}^j B_{j-r}{j \choose r}(k+1)^r{n \brace k} + \sum_{k=1}^{n}\sum_{r=0}^j B_{j-r}{j \choose r}k^{r+1}{n \brace k}.
\end{split}
\end{equation*}
The coefficient of $k^\ell {n \brace k}$ in this sum is then 
\begin{equation*}
\begin{split}
\sum_{r=\ell}^jB_{j-r}{j \choose r}&{r \choose \ell}+B_{j-(\ell-1)}{j \choose \ell-1}\\&=
\sum_{r=\ell}^jB_{j-r}{j \choose \ell}{j-\ell \choose r-\ell}+B_{j-\ell+1}{j \choose \ell-1}.
\end{split}
\end{equation*}
We now make the change of variable $d = j-r$ in the first sum and apply identity (\ref{eq:bell_sum}), leaving us with:
\begin{equation*}
\begin{split}
\left(\sum_{d=0}^{j-\ell}B_{d}{j-\ell \choose j-\ell-d}\right)&{j \choose \ell}+B_{j-\ell+1}{j \choose \ell-1} \\&=
B_{j-\ell+1}{j\choose \ell}+B_{j-\ell+1}{j \choose \ell-1} = B_{j-\ell+1}{j+1\choose \ell}.
\end{split}
\end{equation*}
Thus we have $\displaystyle B_{n+j+1} = \sum_{k=1}^n\sum_{r=0}^{j+1}B_{j+1-r}{j+1 \choose r} k^r {n \brace k} = \sum_{k=1}^n P_{j+1}(k){n \brace k}.$

\section{Computations modulo $p$}

If $p$ is an odd prime, it is well known that ${p^m \choose r} \equiv 0 \mod{p}$ whenever $0 < r < p^m$, and so all but two of the terms of $P_{p^m}(x)$ are congruent to zero modulo $p$.  Applying Theorem \ref{th} and Fermat's Little Theorem, we therefore have 
$$B_{n+p^m} \equiv \sum_{k=1}^n (B_{p^m} + k^{p^m}B_0){n\brace k} \equiv \sum_{k=1}^n (B_{p^m} + k){n \brace k} \mod{p}.$$
Since $P_1(k) = 1+k$, Theorem \ref{th} gives $B_{n+1} = \sum (1+k){n \brace k}$, and so rearranging the previous congruence gives
\begin{equation*}\begin{split}B_{n+p^m} &\equiv (B_{p^m}-1)\sum_{k=1}^n{n \brace k} + \sum_{k=1}^n(1+k){n \brace k} 
\\&\equiv (B_{p^m}-1)B_n + B_{n+1} \mod{p}.\end{split}\end{equation*}  The same congruence holds for $p=2$ as well: if $m>2$ then ${2^m \choose r} \equiv 0 \mod{2}$ for $0<r<2^m$ and our previous argument holds, and if $m=2$ the only additional term is $3k^2B_2 = 6k^2 \equiv 0 \mod{2}$.

To verify Corollary \ref{cor}, then, we only need to prove the following
\begin{lemma}
For every positive integer $m$ and prime number $p$, $B_{p^m} \equiv m+1 \mod{p}$.
\end{lemma}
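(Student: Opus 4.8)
The plan is to induct on $m$, using as the engine the congruence
$$B_{n+p^m} \equiv (B_{p^m}-1)B_n + B_{n+1} \pmod p \qquad (\star)$$
derived above, valid for every positive integer $n$ and every $m\ge 1$. Writing $c_m$ for the residue of $B_{p^m}-1$ modulo $p$, the Lemma is precisely the claim that $c_m\equiv m\pmod p$ for all $m$. First I would establish the one-step recursion $c_{m+1}\equiv c_m+c_1\pmod p$, which immediately gives $c_m\equiv mc_1$; then it remains only to pin down the base value $c_1\equiv 1$, that is, $B_p\equiv 2\pmod p$.

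For the recursion I would iterate $(\star)$. Since $n+p^{m+1}=n+p\cdot p^m$, applying $(\star)$ with exponent $m$ a total of $p$ times and bookkeeping with Pascal's rule should give
$$B_{n+p^{m+1}}\equiv\sum_{i=0}^{p}\binom{p}{i}c_m^{p-i}B_{n+i}\pmod p$$
for every positive $n$; the underlying induction on the number of applications is routine, each step combining $(\star)$ with $\binom{t}{i}+\binom{t}{i-1}=\binom{t+1}{i}$. Now the middle binomial coefficients vanish modulo $p$, Fermat's Little Theorem replaces $c_m^{p}$ by $c_m$, and one further use of $(\star)$ at $m=1$ rewrites the surviving $B_{n+p}$ term, so that $B_{n+p^{m+1}}\equiv(c_m+c_1)B_n+B_{n+1}\pmod p$. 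Comparing this with $(\star)$ at exponent $m+1$ and specializing to $n=1$, where $B_1=1$ is a unit, extracts $c_{m+1}\equiv c_m+c_1\pmod p$, as wanted.

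Everything to this point is formal manipulation of a congruence already in hand, so the step I expect to be the real obstacle is the base case $B_p\equiv 2\pmod p$ --- this is where Fermat's Little Theorem genuinely enters. One clean route is a necklace-style counting argument: the cyclic group $\Z/p\Z$ acts on the set of all $B_p$ partitions of $\{1,\dots,p\}$ by rotating the labels, so, $p$ being prime, every orbit has size $1$ or $p$, and hence $B_p$ is congruent modulo $p$ to the number of partitions fixed by a $p$-cycle. A short check --- using that the only subsets of $\{1,\dots,p\}$ invariant under a $p$-cycle are $\emptyset$ and the whole set --- shows there are exactly two such partitions, $\big\{\{1,\dots,p\}\big\}$ and $\big\{\{1\},\dots,\{p\}\big\}$, giving $B_p\equiv 2$. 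Alternatively one can stay with the identities of the introduction: by (\ref{eq:bell_stirling}) it suffices to know the Stirling numbers ${p\brace k}$ modulo $p$, and from the surjection count $k!\,{p\brace k}=\sum_{i=0}^{k}(-1)^{k-i}\binom{k}{i}i^{p}$ together with $i^p\equiv i\pmod p$ one gets ${p\brace k}\equiv{1\brace k}\equiv 0\pmod p$ for $2\le k\le p-1$ (here $k!$ is invertible), while ${p\brace 1}={p\brace p}=1$, so again $B_p\equiv 1+1=2$. Once $c_1\equiv 1$ is in place, $c_m\equiv m$ and the Lemma follows.
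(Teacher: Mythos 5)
Your argument is correct, and it takes a genuinely different route from the paper's. The paper proves the Lemma in one stroke by a direct orbit count: it lets $\Z/p^m\Z$ act on the partitions of $\Z/p^m\Z$ by translation, notes every non-fixed orbit has size a power of $p$, and identifies the fixed partitions as exactly the $m+1$ partitions into congruence classes modulo $p^{m-j}$ for $0\le j\le m$. You instead establish only the prime case $B_p\equiv 2\pmod p$ combinatorially (your necklace count is the $m=1$ instance of the paper's action --- though note a fixed partition only requires the blocks to be \emph{permuted}, not each block to be invariant, so you should add the one-line observation that all blocks of a fixed partition have equal size dividing $p$; your Stirling-number alternative is also sound) and then bootstrap to general $m$ by iterating the congruence $(\star)$ already proved in Section 3. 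That bootstrap checks out: the $p$-fold iteration $B_{n+p^{m+1}}\equiv\sum_{i=0}^{p}\binom{p}{i}c_m^{p-i}B_{n+i}\pmod p$ follows by the routine induction you indicate, the middle terms vanish since $p\mid\binom{p}{i}$ for $0<i<p$, Fermat gives $c_m^{p}\equiv c_m$, one more application of $(\star)$ handles $B_{n+p}$, and evaluating at $n=1$ (where $B_1=1$) legitimately extracts $c_{m+1}\equiv c_m+c_1$; there is no circularity, since $(\star)$ was derived from Theorem \ref{th} independently of the Lemma. What your route buys is that the only combinatorial input is the case $m=1$ --- in effect you prove Touchard's congruence for $p^m$ by induction on $m$ from the prime case, which is the classical way to extend it, and it works uniformly for $p=2$. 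What the paper's route buys is a single self-contained count with no iteration bookkeeping, exhibiting $m+1$ as an actual number of fixed partitions rather than as the endpoint of a recursion.
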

\begin{proof}
$B_{p^m}$ enumerates the partitions of $\Z/p^m\Z$. Our strategy will be to let $\Z/p^m\Z$ act on these partitions in the natural way: for elements $x$ and $y$ of $\Z/p^m\Z$ we define $f_y(x) = x+y \mod{p^m}$, and $f_y(P)$ is the partition we get by applying $f_y$ element-wise to $P$.  Any partition not fixed under this action will belong to an orbit of size a power of $p$, and so the number of fixed partitions is equivalent to $B_{p^m}$ modulo $p$.

So suppose you have some fixed partition $P$ with elements $a$ and $b$ inside subsets $\mathcal{A}$ and $\mathcal{B}$ (respectively).  Then clearly $f_{b-a}(a) = b$, and since $P$ is fixed this means $f_{b-a}(\mathcal{A}) = \mathcal{B}$.  Hence for a fixed partition $P$, all subsets of $P$ must be the same size, and therefore some power of $p$.  We claim that the only fixed partition whose subsets are size $p^j$ is the partion whose subsets contain elements which are congruent to each other modulo $p^{m-j}$.  This would leave us with $m+1$ many fixed partitions, as desired.

To prove the claim, suppose to the contrary that we have a fixed partition $P$ with elements $a,b$ in the same $p^j$-element subset $\mathcal{A}$ which satisfy $a \not\equiv b \mod{p^{m-j}}$.  Now $f_{b-a}(a) = b$, and so $\mathcal{A}$ is permuted by the action of $f_{b-a}$.  Hence for any integer $r$, the $r$-fold composition of the map $f_{b-a}$ --- namely, the map $f_{r(b-a)}$ --- again takes $a$ to some element of $\mathcal{A}$. Now clearly $f_{r(b-a)}(a) \equiv f_{s(b-a)}(a) \mod{p^m}$ if and only if $$a+r(b-a) \equiv a+s(b-a) \mod{p^m}.$$ Since $a \not\equiv b \mod{p^{m-j}}$, however, this congruence forces $r-s \equiv 0 \mod{p^{j+1}}$.  Hence for $r$ between $1$ and $p^{j+1}$, the elements $f_{r(b-a)}(a)$ are distinct contituents of $\mathcal{A}$. It follows that $|\mathcal{A}|>p^j$, a contradiction.
\end{proof}

\section{Acknowledgement}

Both authors would like to thank Bruce Reznick for his advice, guidance, and constant cheer.

\end{document}